\newcommand{\Description}[1]{}
    \pgfplotsset{compat=1.3}
\newtheorem{theorem}{Theorem}[section]
\newtheorem{lemma}[theorem]{Lemma}
\theoremstyle{definition}
\newtheorem{definition}[theorem]{Definition}
\newtheorem{example}[theorem]{Example}
\newtheorem{remark}[theorem]{Remark}
\newtheorem{algorithm_plain}[theorem]{Algorithm}
\newcommand{\Vclosed}{$V\!$-closed\xspace}
\newcommand{\generatorSet}{\mathbf{G}}
\newcommand{\generator}{\mathbf{g}}
\newcommand{\bT}{{\mathbf{T}}}
\newcommand{\bt}{{\mathbf{t}}}
\newcommand{\bj}{\boldsymbol{j}}
\newcommand{\Id}{\operatorname{I}}
\newcommand{\cA}{{\mathcal{A}}}
\newcommand{\cJ}{{\mathcal{J}}}
\newcommand{\cL}{{\mathcal{L}}}
\newcommand{\norm}[1]{\left\|#1\right\|}
\newcommand{\abs}[1]{\left|#1\right|}
\newcommand{\set}[1]{\left\lbrace #1 \right\rbrace}
\newcommand{\minimize}{\operatorname{minimize}}
\newcommand{\vardot}{\mathord{\,\cdot\,}}
\newcommand{\JSR}{\operatorname{JSR}}
\newcommand{\co}{\operatorname{co}}
\newcommand{\coell}{\operatorname{co}_{\operatorname{e}}}
\newcommand{\closure}{\operatorname{cl}}
\DeclareMathOperator*{\minp}{min\vphantom{p}}
\DeclareMathOperator*{\maxp}{max\vphantom{p}}
\newcommand{\diag}[1]{\operatorname{diag}\left(#1\right)}
\newcommand{\prho}{\rho_{\varepsilon,\norm{\vardot}}}
\renewcommand{\Re}{\operatorname{\mathfrak{Re}}}
\renewcommand{\Im}{\operatorname{\mathfrak{Im}}}
\newcommand{\CC}{{\mathbb C}}
\newcommand{\NN}{{\mathbb N}}
\newcommand{\RR}{{\mathbb R}}
\newcommand{\tbmatrix}[2][r]{\left[\begin{matrix*}[#1]#2\end{matrix*}\right]}
\newcommand{\titlevar}{%
    A Hybrid Approach to Joint Spectral Radius Computation%
}
\newcommand{\abstractvar}{%
    In this paper we propose a new method to determine the joint spectral 
    radius of a finite set of real matrices by verifying that a given family 
    of candidates actually consists of spectrum maximizing products.
    Our algorithm aims at constructing a finite set-valued tree according to 
    the approach of Möller and Reif using a norm that is constructed in the 
    spirit of the invariant polytope algorithm. This combines the 
    broad range of applicability of the first algorithm with the efficiency 
    of the latter.
}
\date{}
\author{Thomas~Mejstrik%
\thanks{University of Vienna, Austria,
{e-mail: \texttt{\small thomas.mejstrik@gmx.at}}}
\and
Ulrich~Reif%
\thanks{Technische Universität Darmstadt, Germany,
{e-mail: \texttt{\small reif@mathematik.tu-darmstadt.de}}}
}
\title{\titlevar}
\begin{document}
\maketitle

\begin{abstract}
\abstractvar
\end{abstract}


\section{Introduction}

We are concerned with the exact computation of the 
\emph{joint spectral radius} (JSR) of a finite set 
$\cA = \{A_1,\dots,A_J\}$ of real matrices $A_j \in \RR^{s\times s}$.
The JSR describes
the maximal asymptotic growth rate of the norms of products of matrices from that set.

The $\JSR$ was initially introduced by Rota and Strang in 1960~\cite{RS1960},
and has since found applications in various seemingly unrelated fields of mathematics.
For instance, in computing the regularity of wavelets and subdivision schemes~\cite{DL1992},
in analysing code capacities~\cite{MOS2001},
in studying the stability of linear switched systems~\cite{Gur95},
and in exploring connections with the Euler partition function~\cite{Prot00},
see also~\cite{GP2016,Jung2009} and the references therein.

Basically, except for the treatment of special cases, there exist two 
different approaches to the computation of the JSR: The finite tree
algorithm of Möller and Reif~\cite{MR2014} and the invariant polytope 
algorithm~\cite{GP2013,GP2016,Mej2020}. 
Both algorithms start from a given set of matrices that are expected to 
be spectrum maximizing products. When either algorithm terminates, this 
assumption is verified, and therewith the JSR is determined. If not, 
runtime limits may have been exceeded or 
the given set of candidates was deficient. The latter can 
be caused by an insufficient search for spectrum maximizing products,
or it may even have been impossible if the given set~$\cA$ of matrices 
does not possess the finiteness property~\cite{LW1995,BM2002,HMST2011}.

Invariant polytope algorithms try to construct a finite 
set of points such that the image of any of these points under 
any of the given matrices lies inside the convex hull
of the points. Existing implementations have proven to be very efficient,
even for matrices of high dimension. However, the range of applicability
is restricted by the fact that termination of the algorithms is in general only possible 
when the given set~$\cA$ of matrices has a simple leading eigenvalue.
By contrast, the finite tree algorithm potentially works even in these cases.
The finite tree algorithm aims at constructing 
a finite tree with nodes consisting of sets of matrix products
whose leaves are bounded by~$1$ with respect to a given matrix norm.
A disadvantage of this algorithm is its strong dependency of the 
runtime on the chosen norm. 

In this paper, we suggest a hybrid method that promises to combine the 
broad range of applicability of the finite tree algorithm with the 
efficiency of the invariant polytope algorithm.
As we will show, the resulting \emph{adapted tree search} 
may converge even if the invariant polytope algorithm does not.

The paper is organized as follows: In the next section, we introduce notation
and recall some important facts about the joint spectral radius 
and its determination from the literature. In Section~\ref{sec:algorithm},
we establish adapted tree search as a new algorithm for JSR determination 
and analyse its basic properties, and in Section~\ref{sec:implementation}, we describe
implementation details and discuss an example.

\subsection{Definitions and basic properties}
We denote
the set of positive integers by $\NN$, 
the set of non-negative integers by $\NN_0$,
the spectral radius of the square matrix $A$ by $\rho(A)$,
the identity matrix by $\Id$,
the closure of the set $X\subseteq\CC^s$ by $\closure(X)$,
the convex hull of $X$ by $\co(X)$, 
and the symmetric convex hull of $X$ by $\co_s X =\co(X \cup -X)$.
Sums and products of sets are understood element-wise.

Throughout, $\cA = \{A_1,\dots,A_J\}$ denotes a
set of $J \in \NN$ matrices $A_j \in \RR^{s \times s}$, and 
$\cJ_n := \{1,\dots,J\}^n$ is the set of index vectors
with $n \in \NN_0$ elements.
Products of matrices are denoted by 
\begin{equation*}
A_{\bj} = A_{j_n} \cdots A_{j_1}
,\quad
\bj = [j_1, \ldots, j_n] \in \cJ_n
.
\end{equation*}
For $n=0$, the expression above is understood to be the identity matrix.
The length of the vector $\bj \in \cJ_n$ is denoted by $|\bj| = n$.
\begin{definition}
The \emph{joint spectral radius} (JSR) of~$\cA$ is defined by
\begin{equation}\label{equ_jsr}
  \JSR(\cA) =
  \lim_{n\rightarrow \infty}
  \max_{\bj\in\cJ_n}
  \norm{A_{\bj}}^{1/n},
\end{equation}
where $\norm{\vardot}$ is any sub-multiplicative matrix norm.
\end{definition}


We will discuss the ideas behind the aforementioned algorithms after defining some crucial concepts.

\begin{definition}
If the matrices $A_j\in\cA$ do not share a common invariant subspace
other than $\set{0}$ and $\RR^s$,
then the set~$\cA$ is called \emph{irreducible}.
\end{definition}


From an algebraic point of view, irreducibility expresses that there does not exist a 
change of basis under which all matrices of the given set are simultaneously block triangularized.

\begin{theorem}[{\cite[Proposition 3]{BW1992}}]
\label{thm:bounded} 
Given $v\in\RR^s \setminus \{0\}$, define the set $P(v)$ by
\begin{equation}
\label{equ_bounded}
  P(v) = \co_s \bigcup_{n\in\NN_0} \bigcup_{\bj \in \cJ_n} A_{\bj} v.
\end{equation}
If~$\cA$ is irreducible and $\JSR(\cA)=1$, 
then $P(v)$ is a bounded subset of $\RR^s$ with 
non-empty interior.
\end{theorem}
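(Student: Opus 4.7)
The plan is to verify the two assertions of the theorem separately: boundedness of $P(v)$, which hinges on the hypothesis $\JSR(\cA)=1$, and non-emptiness of its interior, which hinges on irreducibility. Both claims are for the symmetric convex hull of $U(v) := \bigcup_{n\in\NN_0}\bigcup_{\bj\in\cJ_n} A_{\bj}v$, so it suffices to control $U(v)$ itself and then pass to $\co_s$.

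\textbf{Boundedness.} Since $\cA$ is irreducible and $\JSR(\cA)=1$, a classical result of Barabanov (or Wirth) guarantees the existence of an extremal norm $\norm{\vardot}_*$ on $\RR^s$ with the property that $\norm{A_j x}_* \le \norm{x}_*$ for every $j \in \{1,\dots,J\}$ and every $x \in \RR^s$. Iterating along an arbitrary word $\bj = [j_1,\dots,j_n]$ gives $\norm{A_{\bj}v}_* \le \norm{v}_*$, so $U(v)$ lies inside the closed $\norm{\vardot}_*$-ball of radius $\norm{v}_*$ around the origin. The symmetric convex hull of a bounded set is bounded (the ball is symmetric and convex), hence $P(v)$ is bounded.

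\textbf{Non-empty interior.} Let $V := \operatorname{span} U(v)$. For each $j$ and each $A_{\bj}v \in U(v)$ we have $A_j(A_{\bj}v) = A_{[j,\bj]}v \in U(v) \subseteq V$, so $V$ is invariant under every $A_j$. Moreover $V$ is non-zero because the word of length $0$ contributes $v \ne 0$. Irreducibility of $\cA$ then forces $V = \RR^s$. Consequently one may select $s$ linearly independent vectors $w_1,\dots,w_s \in U(v)$; their symmetric convex hull $\co_s\{w_1,\dots,w_s\}$ is a full-dimensional symmetric polytope in $\RR^s$ and therefore contains an open neighbourhood of the origin. Since $\co_s\{w_1,\dots,w_s\} \subseteq P(v)$, the interior of $P(v)$ is non-empty.

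The only genuinely non-trivial ingredient is the existence of the extremal norm used in the boundedness step; without it, bounding $\sup_{\bj}\norm{A_{\bj}v}$ from $\JSR(\cA)=1$ alone is subtle, because $\JSR$ only controls the asymptotic growth $\norm{A_{\bj}}^{1/n}$, not the supremum of $\norm{A_{\bj}}$. Everything else is an elementary invariant-subspace argument combined with the obvious fact that a symmetric convex set containing a basis has non-empty interior.
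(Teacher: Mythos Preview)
The paper does not give its own proof of this statement; it is quoted verbatim as a result from Berger--Wang~\cite{BW1992} and used as a black box. So there is no in-paper argument to compare against.

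Your treatment of the non-empty interior is correct and is the standard invariant-subspace argument.

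Your boundedness step, however, risks circularity. You invoke the existence of an extremal (Barabanov/Wirth) norm for an irreducible family with $\JSR(\cA)=1$, but the usual construction of such a norm \emph{presupposes} that the product semigroup $\{A_{\bj}:\bj\in\bigcup_n\cJ_n\}$ is bounded---one then sets $\norm{x}_* := \sup_{\bj}\norm{A_{\bj}x}$ and checks the extremality property. Boundedness of the semigroup is exactly the non-trivial content of the theorem you are proving (applied to a basis of vectors $v$). You acknowledge that the extremal norm is ``the only genuinely non-trivial ingredient''; the point is that this ingredient is typically derived \emph{from} the present theorem, not independently of it.

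The original Berger--Wang argument proceeds differently: assuming the semigroup is unbounded, take products $P_k$ with $\norm{P_k}\to\infty$, pass to a subsequential limit $Q$ of $P_k/\norm{P_k}$, and show that the range of $Q$ (which is a proper, non-zero subspace since $Q\ne 0$ but $Q$ is a limit of rank-deficient-in-the-limit matrices) is invariant under every $A_j$, contradicting irreducibility. If you can cite a construction of the extremal norm that does not pass through semigroup boundedness, your proof is fine; otherwise the boundedness half should be reworked along these lines.
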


\begin{definition}
\label{def:minknorm}
Let $P\in\RR^s$ be a bounded, convex set with non-empty interior,
and such that
$t P\subseteq P$ for all $t \in [-1,1]$. 
We define the \emph{Minkowski norm}
$\norm{\vardot}_P:\RR^s\rightarrow\RR$ by
\begin{equation}
\label{equ_minknorm}
  \norm{x}_{P} = \inf \set{t>0:x\in tP}
  .
\end{equation}
\end{definition}

\begin{lemma}[{\cite[Theorem~2.2]{Jung2009}}]
If~$\cA$ is irreducible, then
\begin{equation}\label{equ_norm}
  \norm{A_j x}_{P(v)} \leq \JSR(\cA) \cdot \norm{x}_{P(v)}
\end{equation}
for all $x\in\RR^s$, $v \in \RR^s\setminus\{0\}$, and $A_j\in\cA$.
\end{lemma}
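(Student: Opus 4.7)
The plan is to reduce to the case $\JSR(\cA)=1$ by rescaling, use Theorem~\ref{thm:bounded} to ensure that the Minkowski functional is a genuine norm, and then establish the invariance $A_j P(v)\subseteq P(v)$ by a short orbit argument.

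First, set $\rho := \JSR(\cA)$ and put $\tilde A_j := A_j/\rho$. The family $\tilde\cA = \{\tilde A_1,\dots,\tilde A_J\}$ is still irreducible (irreducibility is preserved under multiplication by a nonzero scalar) and satisfies $\JSR(\tilde\cA) = 1$. Let $\tilde P(v) := \co_s \bigcup_{n\in\NN_0} \bigcup_{\bj\in\cJ_n} \tilde A_{\bj} v$. By Theorem~\ref{thm:bounded}, $\tilde P(v)$ is bounded with non-empty interior, so Definition~\ref{def:minknorm} yields a genuine norm $\norm{\vardot}_{\tilde P(v)}$ on $\RR^s$. This is the set intended by $P(v)$ in the statement.

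The heart of the argument is the inclusion $\tilde A_j \tilde P(v) \subseteq \tilde P(v)$. Because $\tilde A_j$ is linear, it commutes with forming symmetric convex hulls, i.e.\ $\tilde A_j \co_s S = \co_s(\tilde A_j S)$ for any $S\subseteq\RR^s$. Applying this to the generating orbit and using the identity $\tilde A_j \tilde A_{\bj} v = \tilde A_{[\bj,j]} v$ (immediate from the convention $A_{\bj} = A_{j_n}\cdots A_{j_1}$), we obtain
\[
\tilde A_j \tilde P(v)
= \co_s \bigcup_{n\in\NN_0} \bigcup_{\bj\in\cJ_n} \tilde A_{[\bj,j]} v
\subseteq \co_s \bigcup_{m\in\NN_0} \bigcup_{\bk\in\cJ_m} \tilde A_{\bk} v
= \tilde P(v),
\]
since every appended index vector $[\bj,j]$ belongs to $\cJ_{n+1}$. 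Translating this set inclusion into a Minkowski-norm inequality is immediate from Definition~\ref{def:minknorm}: if $x\in t\tilde P(v)$, then $\tilde A_j x \in t\tilde A_j\tilde P(v)\subseteq t\tilde P(v)$, hence $\norm{\tilde A_j x}_{\tilde P(v)} \leq \norm{x}_{\tilde P(v)}$. Substituting $\tilde A_j = A_j/\rho$ rearranges this into the claimed $\norm{A_j x}_{\tilde P(v)} \leq \rho\, \norm{x}_{\tilde P(v)}$.

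I expect no serious obstacle. The only delicate point is the normalization: boundedness of $P(v)$, and thus validity of $\norm{\vardot}_{P(v)}$ as a norm, requires $\JSR(\cA) = 1$ via Theorem~\ref{thm:bounded}, and it is precisely this rescaling that produces the factor $\rho$ on the right-hand side. The geometric content --- that appending a letter to an index word yields another index word --- is trivial.
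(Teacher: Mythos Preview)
The paper does not supply its own proof of this lemma; it simply cites \cite[Theorem~2.2]{Jung2009}. So there is no in-paper argument to compare against, and your write-up must stand on its own.

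Your argument is correct and is in fact the standard one. The rescaling to $\JSR=1$, the appeal to Theorem~\ref{thm:bounded} for boundedness and non-empty interior, the orbit closure $\tilde A_j\tilde P(v)\subseteq\tilde P(v)$ via index concatenation, and the passage from set inclusion to the Minkowski-norm inequality are all clean. The one point worth stating explicitly is the one you already flag: as literally written in~\eqref{equ_bounded}, $P(v)$ is only guaranteed to define a norm when $\JSR(\cA)=1$, so the inequality~\eqref{equ_norm} should be read for the rescaled set $\tilde P(v)$ (equivalently, for $P(v)$ built from $\cA/\JSR(\cA)$). Since the paper immediately normalises to $\JSR=1$ in the ``Preparation'' paragraph, this is consistent with its intended use and your interpretation is the right one.
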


\begin{definition}
The set~$\cA$ is said to possess the
\emph{finiteness property} if there exists a finite product 
$\Pi=A_{\generator}$, $\generator \in \cJ_n$,
such that $\rho(\Pi)^{1/n}=\JSR(\cA)$.
We will call any such product a \emph{spectrum maximizing product} or short an \emph{s.m.p.}.
\end{definition}

\begin{definition}
Let $\Pi\in\RR^{s\times s}$ be a matrix 
with eigenvalues sorted by modulus, i.e.~$|\lambda_1| \ge \cdots \ge |\lambda_s|$.
The eigenvalues $\lambda_i$ with $|\lambda_i| = \rho(\Pi)$ are the \emph{leading eigenvalues} 
and the corresponding eigenvectors are the \emph{leading eigenvectors}.
If $|\lambda_2| < \rho(\Pi)$ or $\lambda_1 = \bar{\lambda}_2 \not\in\RR$ and 
$|\lambda_3| < \rho(\Pi)$,
we say that the leading eigenvalue $\lambda_1$ is~\emph{simple}\footnote{%
In the literature a simple leading eigenvector is also called \emph{unique}.%
}.
\end{definition}
An immediate consequence of Theorem~\ref{thm:bounded} is the following:
\begin{lemma}
For any s.m.p.~$\Pi$ of an irreducible set~$\cA$ 
the geometric and algebraic multiplicities of the leading eigenvectors coincide.
\end{lemma}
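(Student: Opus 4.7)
The plan is to derive the statement from Theorem~\ref{thm:bounded} by a boundedness argument applied to the orbit $\{\Pi^k v\}_{k\ge 0}$.

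First, I would reduce to the normalized situation $\JSR(\cA)=1$. Setting $\alpha := 1/\JSR(\cA)$ and $\cA':=\alpha\cA$, one has $\JSR(\cA')=1$, and irreducibility is preserved under scaling. If $\generator\in\cJ_n$ is an s.m.p.\ of $\cA$ with $\Pi=A_{\generator}$, then the corresponding product $\Pi':=\alpha^n\Pi$ for $\cA'$ satisfies $\rho(\Pi')=\alpha^n\rho(\Pi)=\alpha^n\JSR(\cA)^n=1$. Since scalar multiplication does not change the Jordan structure of $\Pi$, it suffices to prove the statement for $\cA'$, i.e., under the assumption $\JSR(\cA)=1$ and $\rho(\Pi)=1$.

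Next I would apply Theorem~\ref{thm:bounded}: because $\cA$ is irreducible and $\JSR(\cA)=1$, the set $P(v)$ from~\eqref{equ_bounded} is bounded for every $v\in\RR^s\setminus\{0\}$. Since $\generator^k:=[\generator,\ldots,\generator]\in\cJ_{nk}$ for every $k\in\NN_0$ and $A_{\generator^k}=\Pi^k$, the orbit
\begin{equation*}
\{\Pi^k v : k\in\NN_0\}
\end{equation*}
is a subset of $P(v)$, and therefore bounded in $\RR^s$ for every $v$.

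Now I would argue by contradiction. Suppose some leading eigenvalue $\lambda$ of $\Pi$, with $|\lambda|=1$, has algebraic multiplicity strictly larger than its geometric multiplicity. Then the real Jordan form of $\Pi$ contains a Jordan block of size $\ge 2$ associated with $\lambda$, so there exists a nonzero (generalized) vector $v\in\RR^s$ on which $\Pi^k v$ grows at least linearly in $k$: writing $v$ in a Jordan basis, the component coming from a nontrivial Jordan block at $\lambda$ contributes a term of modulus $\asymp k|\lambda|^{k-1}=k$, which tends to $\infty$. This contradicts the boundedness of $\{\Pi^k v\}_k$ established in the previous step.

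The only subtlety I anticipate is the handling of complex leading eigenvalue pairs: for a real matrix $\Pi$, a Jordan block at a non-real $\lambda$ on the unit circle comes paired with one at $\bar\lambda$, and the corresponding real invariant subspace is $4$-dimensional. However, the growth rate of $\|\Pi^k v\|$ on a non-semisimple real Jordan block is still polynomial of positive degree in $k$, which is all that is needed for the contradiction. The real case $\lambda=\pm1$ is analogous and slightly simpler. This completes the argument.
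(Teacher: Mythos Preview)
Your argument is correct and matches the paper's indicated approach: the paper states the lemma as ``an immediate consequence of Theorem~\ref{thm:bounded}'' without further detail, and your proof spells out precisely this consequence---boundedness of the orbit $\{\Pi^k v\}_k$ forces semisimplicity of the unit-modulus eigenvalues. The normalization step and the treatment of complex conjugate eigenvalue pairs are handled cleanly; the only tacit assumption is $\JSR(\cA)>0$, which holds automatically for irreducible $\cA$.
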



\begin{definition}
The set~$\cA$ is said to possess a \emph{spectral gap} 
if there exists $\gamma < 1$ such that any
product $A_{\bj}$ is either an s.m.p.~or its spectral radius is bounded by
\begin{equation*}
  \rho(A_{\bj}) \le \gamma \cdot \JSR(\cA)^{|\bj|}
  .
\end{equation*}
\end{definition}


\subsection{Preparation}
The invariant polytope algorithms and the tree algorithm
share the following basic procedure:
First, an \emph{s.m.p.-candidate} $\Pi = A_{\generator}, \generator \in \cJ_n$, 
which is a product for which there is numerical evidence that it is in fact an s.m.p.,
is determined; Actually, a whole list of candidates is determined, but this is irrelevant
at this point.
A fast algorithm identifying s.m.p.-candidates can be found in~\cite[Section~3]{Mej2020}.
Second, scaling the given matrices using the factor
$\lambda := \rho(\Pi)^{1/n}$ yields the set 
$\tilde \cA := \{\lambda^{-1} A_1,\dots,\lambda^{-1} A_J\}$.
The corresponding s.m.p.-candidate $\tilde \Pi = \lambda^{-n} \Pi$,
has spectral radius $\rho(\tilde \Pi) = 1$.
Third, and this is the crucial task,
one tries to prove that $\tilde \Pi$ is actually 
an s.m.p.~of $\tilde \cA$, which is equivalent to
$\JSR(\tilde \cA) = 1$ and $\JSR(\cA) = \lambda$.

From now on, we assume that the first two steps have already 
been carried out so that, omitting notation of the tilde, 
all s.m.p.-candidates $\Pi_1,\dots,\Pi_M$
have spectral radius~$1$, and the conjecture to be proven is $\JSR(\cA) = 1$.

\subsection{Two known algorithms}
Let us briefly recall the two general-purpose algorithms for JSR determination
described in the literature:
The \emph{finite tree algorithm}~\cite{MR2014}
and the \emph{invariant polytope algorithm}~\cite{GP2013,GP2016,Mej2020}.

Apart from the s.m.p.-candidates $\Pi_1,\dots,\Pi_M$, 
the \emph{finite tree algorithm} needs a fixed sub-multiplicative matrix norm as an additional input.
It then tries to compute a tree where each node consists of a set of matrix products.
Children of nodes consist of all matrix products which can be formed by multiplying the parent 
matrices with a single matrix~$A_j$ or with all integer powers of an s.m.p.~$\Pi_m$.
A leaf is a node in the tree with the property that all its containing matrix products 
have norm not exceeding~$1$.

The \emph{invariant polytope algorithm}
on the other hand tries to construct a vector-norm under
which induced matrix norm all matrices from the set~$\cA$ have norm not exceeding~$1$.
It does so by trying to construct the unit ball $P$ of a norm,
which is invariant under all matrices in~$\cA$.
The unit ball is constructed in the fashion of~\eqref{equ_bounded}
with a carefully chosen starting vector $v$
(or actually a set of starting vectors).

\section{Adapted tree search}
\label{sec:algorithm}
In order to combine the ideas of the two algorithms described above,
we introduce the basic concept of the finite tree algorithm using notation
mostly following~\cite{MR2014}.

The set $\bigcup_{n \in \NN_0}\cJ_n$ of index vectors of arbitrary length,
and with it the set of all finite matrix products $A_{\bj} = A_{j_n} \cdots A_{j_1}$,
will be equipped with the structure of a rooted \mbox{$J$-ary} tree in a
natural way.
To prove $\JSR(\cA) \le 1$, it is sufficient to determine a
finite sub-tree with the property that 
every leaf contains a matrix with norm not exceeding~$1$.
By contrast, if $\JSR(\cA) = 1$, then such sub-trees
are typically infinite, what makes them inconvenient for an 
immediate algorithmic scanning. To account for that situation, the tree algorithm 
aims at constructing a \emph{finite} tree whose nodes do not contain single matrices, but sets
of products, which can be finite or countably infinite.

\begin{definition}[$(\cA,\generatorSet)$-tree]
\label{def:tree}
Let $\generatorSet = \{\generator_1,\dots,\generator_I\}$ be a set of index vectors $\generator_i \in \bigcup_{n \in \NN}\cJ_n$,
whose corresponding matrix products $A_{\generator_i}$ have spectral radius not exceeding~1.
A finite, rooted tree $\bT$ with nodes consisting of sets of matrices
is called an \emph{$(\cA,\generatorSet)$-tree} if it has the following properties:
\begin{itemize}
\item The root node $\bt_0 := \{\Id\}$ contains the identity matrix.
\item Each node $\bt \in \bT$ is either 
\begin{itemize} 
\item a \emph{leaf}, i.e., it has no children,
\item or parent of exactly $J$ \emph{siblings} $\{A_j P : P \in \bt\}$, $j = 1,\dots,J$,
\item or parent of arbitrary many \emph{generators} $\{A_\generator^n P:P \in \bt,\ n \in \NN_0\}$ 
for some $\generator \in \generatorSet$.
\end{itemize}
\item No generator is a leaf.
\end{itemize}
A leaf is called \emph{covered} if it is a subset of one of its predecessors,
and otherwise \emph{uncovered}.
The \emph{leafage} of $\bT$ is denoted by 
$\cL := \{L \in \bt : \text{ $\bt$ is uncovered leaf of $\bT$} \}$.
\end{definition}

\begin{definition}[1-boundedness]\label{def:1bd}
Given a sub-multiplicative matrix norm  $\norm{\vardot}$,
$\bT$ is called \emph{1-bounded} if $\sup_{L\in\cL} \norm{L} \le 1$
and \emph{strongly~1-bounded} if $\sup_{L\in\cL} \norm{L} < 1$.
\end{definition}

\begin{figure}
  \centering
    \includegraphics[height=5cm]{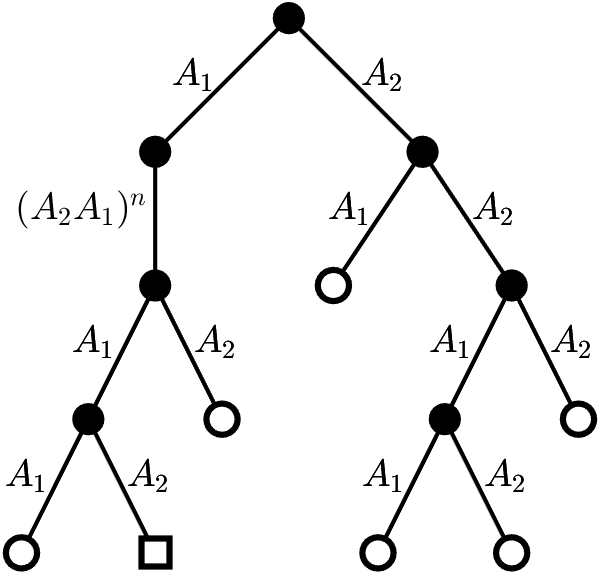}
    \caption{$(\cA,\generatorSet)$-tree for $\cA = \{A_1,A_2\}$ and $\generatorSet = \{[1,2]\}$
    with covered ($\square$) and uncovered ($\circ$) leafs.
    The covered leaf $\{A_2 A_1 (A_2 A_1)^n A_1, n \in \NN_0\} = \{(A_2 A_1)^n A_1: n \in \NN\}$
    is a (proper) subset of its grandparent $\{(A_2 A_1)^n A_1: n \in \NN_0\}$.
    }
    \label{fig:examplary_tree}
\end{figure}

In practice, the matrices $A_\generator$, $\generator \in \generatorSet$,
are s.m.p.-candidates
or matrices with spectral radius close to~$1$.
Figure~\ref{fig:examplary_tree} illustrates the definition, and in particular 
the emergence of covered nodes.

The main result in \cite{MR2014} (Theorem 3.3) is the following:
\begin{theorem}
\label{thm:FT}
Let $\bT$ be an $(\cA,\generatorSet)$-tree and 
$\norm{\vardot}$ be a sub-multiplicative matrix norm.
If $\bT$ is 1-bounded, then $\JSR(\cA) \le 1$.
\end{theorem}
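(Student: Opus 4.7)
The goal is to extract from the 1-boundedness of $\bT$ a uniform bound $\norm{A_\bj} \le C$ independent of $|\bj|$, with $C$ depending only on $\bT$ and $\norm{\vardot}$. From such a bound, submultiplicativity yields
\begin{equation*}
\JSR(\cA) = \lim_{n\to\infty} \max_{\bj \in \cJ_n} \norm{A_\bj}^{1/n} \le \lim_{n\to\infty} C^{1/n} = 1.
\end{equation*}

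The first step is a piece of bookkeeping: attach to every node $\bt \in \bT$ a set $\cL_\bt$ of index vectors such that $\bt = \{A_\bl : \bl \in \cL_\bt\}$, defined inductively along the tree. Writing $*$ for concatenation of index vectors (with the convention $A_{\bl_1 * \bl_2} = A_{\bl_2} A_{\bl_1}$), the rules are $\cL_{\bt_0} = \{\emptyset\}$ at the root, $\{\bl * [j] : \bl \in \cL_\bt\}$ at the sibling child for index $j$, and $\{\bl * \generator^n : \bl \in \cL_\bt,\; n \in \NN_0\}$ at the generator child for $\generator$. Every matrix appearing in a node of $\bT$ then has an explicit word representation determined by the root-to-node path and the exponents chosen at the generator transitions.

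The core step is a parsing lemma: for every $\bj$ of length at least some tree-dependent threshold $N$, there exists a factorization $\bj = \bl * \br$ with $\bl \in \cL_\bt$ for some uncovered leaf $\bt$ and with $|\bl| \ge 1$. I would prove this algorithmically by descending $\bT$ from the root while reading $\bj$ from the left: sibling-parents force the next-index choice and advance the pointer by one, while generator-parents are handled greedily, matching the available generators against the current suffix of $\bj$ and picking the exponent $n$ maximally. Finiteness of $\bT$ forces the descent to terminate at a leaf. If that leaf is uncovered, 1-boundedness delivers $\norm{A_\bl} \le 1$; if it is covered, then $A_\bl$ also lies in some proper ancestor $\bt'$, so the same matrix is represented by a word in $\cL_{\bt'}$ and the descent can be restarted from $\bt'$ with different exponent choices, a rerouting that can occur only finitely often. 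With the lemma in hand, submultiplicativity gives $\norm{A_\bj} \le \norm{A_\br}$, and iterating until the residual length drops below $N$ yields $\norm{A_\bj} \le \max_{|\br| < N}\norm{A_\br} =: C$.

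The main obstacle is the parsing lemma, and inside it the guarantee that the greedy descent never stalls with $|\bl| = 0$. The delicate case is a purely generator-only descent on which no available generator happens to be a prefix of the current suffix of $\bj$; here one must invoke the clause that no generator is a leaf, so that a generator-only path cannot terminate immediately, together with the covered-leaf mechanism to rule out infinite rerouting. Making the resulting induction quantitative — so that it yields a residual $\br$ that is genuinely strictly shorter than $\bj$ — is where the real work of the proof lies.
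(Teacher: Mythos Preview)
The paper does not prove this theorem; it quotes it as Theorem~3.3 of M\"oller--Reif~\cite{MR2014}. So there is no in-paper argument to compare against, and I assess your sketch on its own.

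Your very first reduction is already too strong: a uniform bound $\norm{A_{\bj}} \le C$ is not available in general. Take $\cA = \{A\}$ with $A = \bigl[\begin{smallmatrix}1&1\\0&1\end{smallmatrix}\bigr]$ and $\generatorSet = \{[1]\}$, and let $\bT$ be the three-node tree Root $\to$ generator child $\{A^n : n \ge 0\}$ $\to$ sibling child $\{A^{n+1} : n \ge 0\}$. The sibling child is contained in its parent, hence is a covered leaf; there are no uncovered leaves, so $\cL = \emptyset$ and $\bT$ is vacuously $1$-bounded. Yet $\norm{A^n}$ grows without bound, so no uniform $C$ exists. Correspondingly your parsing lemma fails outright for $\bj = [1,\ldots,1]$: the descent cycles between the generator node and the covered leaf and never reaches an uncovered leaf. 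The same phenomenon arises less degenerately whenever some $A_{\generator}$ has an eigenvalue of modulus~$1$ with a non-trivial Jordan block; nodes below such a generator then contain matrices of unbounded norm.

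Thus the obstacle you flag --- that the descent might ``stall with $|\bl|=0$'' --- is not the real one. Because a leaf can only be entered by a sibling step, each pass through a covered leaf does consume at least one index, so $|\bl|$ does grow and rerouting is indeed finite. The actual failure mode is that the descent may consume all of $\bj$ through covered-leaf rerouting without ever meeting an uncovered leaf, leaving $A_{\bj}$ stranded in an interior node whose matrix family need not be bounded. The remedy is to replace the uniform target by a sub-exponential one: since $\rho(A_{\generator}) \le 1$ for every $\generator \in \generatorSet$, the powers $\norm{A_{\generator}^n}$ grow at most polynomially, and combined with the bounded depth of the finite tree this yields $\norm{A_{\bj}} \le C \cdot p(|\bj|)$ for a fixed polynomial $p$, which still forces $\JSR(\cA) \le 1$. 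Your parsing-and-rerouting skeleton can be made to deliver such an estimate, but the induction has to track polynomial rather than constant bounds on the stranded factor.
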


We next give a definition and a theorem which are at the core of 
invariant polytope algorithms:
\begin{definition}[\Vclosed{}ness]
Let $V\subset\RR^s$ be a finite set of vectors spanning $\RR^s$.
An $(\cA,\generatorSet)$-tree $\bT$ is said to be \Vclosed if
$\cL V\subseteq \co_s V$.
If there exists $\gamma <1$ such that $\cL V \in \gamma \co_s (V)$,
then $\bT$ is  \emph{strongly \Vclosed}.

\end{definition}

As in Definition~\ref{def:1bd}, we require that only matrices in the leafage of $\bT$ map vertices to interior of $\co_s V$.
Indeed, any leading eigenvector of an s.m.p.~will always get mapped to the boundary of $\co_s V$
by matrices element  of a covered node.

%

The conditions under
which the invariant polytope algorithm terminates~\cite[Theorem 3.3]{GP2016}
can be restated using the terminology of the tree algorithm.
\begin{theorem}
\label{thm:ipa}
If the set~$\cA$ is irreducible, and
its $\JSR$ does not exceed~$1$,
then the following are equivalent:
\begin{itemize}
\item There exists an $(\cA,\generatorSet)$-tree with exactly 1 generator directly located at the root node,
which is strongly $V\!$-closed for a set $V$ of a leading eigenvector $v_1$ of an s.m.p.\ %
and suitable scaled additional vectors $v_2,\ldots,v_s$, such
that $\set{v_1, \ldots, v_s}$ span $\RR^s$.

\item The set~$\cA$ has a spectral gap,
there exist exactly 1 s.m.p.~(up to cyclic permutations and powers),
and the s.m.p.~has exactly one simple leading eigenvalue.
\end{itemize}
\end{theorem}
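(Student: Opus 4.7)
The plan is to view this statement as a translation of~\cite[Theorem~3.3]{GP2016}
into the tree vocabulary of Definition~\ref{def:tree}; the main task
is therefore to establish a dictionary between strongly \Vclosed{}
$(\cA,\generatorSet)$-trees with a single root-level generator and the
invariant polytopes studied there. For the direction $(1)\Rightarrow(2)$, I would
fix a strongly \Vclosed{} tree $\bT$ with unique generator $\generator$
at the root and set $P := \co_s V$. Every finite product $A_\bj$
corresponds to a walk through $\bT$ composed of sibling steps and
iterates of $A_\generator$, and the strict inclusion
$L V \subseteq \gamma P$ for every leaf $L \in \cL$ implies that
$A_\bj V$ can touch $\boundary P$ only when $A_\bj$ is cyclically
conjugate to a power of $A_\generator$. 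Through the Minkowski norm
$\norm{\vardot}_P$ of Definition~\ref{def:minknorm} this produces the
spectral gap, the uniqueness of the s.m.p.\ up to cyclic permutations
and powers, and, since $v_1$ is a vertex of $P$, the simplicity of the
leading eigenvalue.

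For $(2)\Rightarrow(1)$ I would take the unique s.m.p.\ $A_\generator$
and its leading eigenvector $v_1$, attach $\generator$ as the single
root-level generator, and expand the tree by sibling expansions
everywhere else. The spectral gap together with the Minkowski norm from
$P(v_1)$ of Theorem~\ref{thm:bounded} forces every orbit point
$A_\bj v_1$ with $\bj$ not cyclically a power of $\generator$ into the
interior of $P(v_1)$, and finite termination of the sibling expansion
is the technical content imported from~\cite[Theorem~3.3]{GP2016}. The
auxiliary vectors $v_2,\ldots,v_s$ are then chosen to complete $\{v_1\}$ to
a basis of $\RR^s$ and scaled small enough that they and their images
under $\cA$ stay inside $\gamma \co_s V$.

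The hard part will be establishing the finite termination of the
sibling expansion in the converse direction: this depends on both the
spectral gap and the simplicity of the leading eigenvalue in an
essential way, and it requires a careful calibration of the scales of
$v_2,\ldots,v_s$ so that they span $\RR^s$ yet remain strictly absorbed
by $\gamma \co_s V$ after any sibling step, while $v_1$ itself touches
$\boundary(\co_s V)$ only under generator iterations. Once these points
are settled, the equivalence reduces to the polytope-theoretic argument
of the cited paper, re-expressed in the tree vocabulary.
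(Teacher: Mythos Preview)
Your proposal is correct and aligned with the paper: the paper does not give an independent proof of this theorem at all, but merely introduces it as a restatement of~\cite[Theorem~3.3]{GP2016} in the tree vocabulary of Definition~\ref{def:tree}. Your sketch goes further than the paper by actually spelling out the dictionary between strongly \Vclosed{} trees with a single root-level generator and the invariant polytopes of~\cite{GP2016}, which is precisely the translation the paper leaves implicit.
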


Strict closedness is relevant in applications for numerical reasons.
Deciding if a vertex $Lv$  is inside or outside of a polytope $\co_s(V)$
is hard or even impossible
for a point on the boundary when working in a  floating-point environment
because even smallest rounding errors or inexact input data may distort the result.

Invariant polytope algorithms are known to be highly efficient in applications,
while the finite tree approach covers a wider range of treatable cases.
The following considerations prepare a hybrid method
that is designed to combine the advantages.


%
\begin{theorem}
\label{thm:FTIP}
If there exists a \Vclosed $(\cA,\generatorSet)$-tree, then $\JSR(\cA) \le 1$. 
\end{theorem}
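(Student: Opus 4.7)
The plan is to reduce the statement to Theorem~\ref{thm:FT} by constructing a suitable norm from the given vector set~$V$ and showing that $\bT$ is automatically 1-bounded in this norm.

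First I would set $P := \co_s V$. Since $V$ is finite, $P$ is a bounded polytope; since $V$ spans $\RR^s$, $P$ has non-empty interior; and since $P$ is symmetric by construction, we have $tP \subseteq P$ for every $t \in [-1,1]$. Hence $P$ satisfies all the hypotheses of Definition~\ref{def:minknorm}, and the Minkowski functional $\norm{\vardot}_P$ is a genuine norm on $\RR^s$. I would then equip $\RR^{s\times s}$ with the operator norm induced by $\norm{\vardot}_P$, which is automatically sub-multiplicative and will play the role of the norm required by Theorem~\ref{thm:FT}.

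The central step is to verify that this norm makes $\bT$ 1-bounded, i.e.\ that $\norm{L}_P \le 1$ for every $L \in \cL$. By $V$-closedness, $LV \subseteq \co_s V = P$. Using linearity of $L$, convexity and symmetry of $P$, and the fact that the convex hull operation is idempotent, we obtain
\begin{equation*}
L \cdot P \;=\; L \cdot \co_s V \;=\; \co_s(LV) \;\subseteq\; \co_s P \;=\; P,
\end{equation*}
i.e.\ $LP \subseteq P$. Since $P$ is the unit ball of $\norm{\vardot}_P$, this is exactly the statement $\norm{L}_P \le 1$. Therefore $\sup_{L \in \cL} \norm{L}_P \le 1$, so $\bT$ is 1-bounded in this norm, and Theorem~\ref{thm:FT} yields $\JSR(\cA) \le 1$.

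The main (and essentially only) conceptual point is the geometric observation in the middle step, namely that the pointwise condition $LV \subseteq \co_s V$ upgrades, via linearity of $L$ and the symmetric-convex hull structure, to the containment $LP \subseteq P$ of the full polytope. Once this is noticed, the rest of the argument is just an invocation of Theorem~\ref{thm:FT}; no further properties of the tree (such as the precise structure of generator children, or the irreducibility of~$\cA$) are needed.
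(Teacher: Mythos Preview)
Your proof is correct and follows essentially the same approach as the paper: define $P=\co_s V$, use the induced Minkowski norm, show that $V$-closedness forces $\norm{L}_P\le 1$ for every $L\in\cL$, and invoke Theorem~\ref{thm:FT}. The only cosmetic difference is that the paper verifies $LP\subseteq P$ by expanding an arbitrary $x\in P$ as a signed convex combination of the $v_m$, whereas you use the equivalent one-line identity $L\,\co_s V=\co_s(LV)\subseteq\co_s P=P$.
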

\begin{proof}
Since $V=\set{v_1,\ldots,v_m}$ is spanning $\RR^s$, 
the set $P := \co_s(V)$ has non-empty interior and hence 
defines a Minkowski norm $\norm{\vardot}_P$ according to 
Definition~\ref{def:minknorm}.
Moreover,
any point $x \in \RR^s$ with $\norm{x}_P \leq 1$
can be written in the form $x = \sum_{m=1}^M \lambda_m v_m$
with coefficients  $\lambda_m$ satisfying
$\sum_{m=1}^M \abs{\lambda_m} \leq 1$.
Let $L \in \cL$ be any matrix in the leafage of $\bT$. 
We obtain $Lx = \sum_{m=1}^M \lambda_m Lv_m$ and observe that 
$L v_m \in P$ by assumption. This implies
$Lx \in \co_sP = P$ and hence $\norm{Lx}_P \le 1$. We conclude 
$\norm{L}_P = \max\{\norm{Lx}_P : \norm{x}_P=1\}\le 1$,
and Theorem~\ref{thm:FT} applies.
\end{proof}


Theorem~\ref{thm:ipa} informs us that,
whenever the invariant polytope algorithm terminates,
there exists a strongly \Vclosed tree
of a a very special shape.
But in addition, \Vclosed and even strongly \Vclosed trees may exist
in cases where invariant polytope algorithms fail.
\begin{theorem}
\label{thm:terminate}
Let $V$ be a finite set of vectors spanning $\RR^s$.
If $\cA V \subseteq \co_s V$, then~$\cA$ possesses a \Vclosed $(\cA,\generatorSet)$-tree.
Moreover, there are cases where invariant polytope algorithms do not terminate, 
but for which strongly \Vclosed trees exist nevertheless.
\end{theorem}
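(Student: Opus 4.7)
My plan for the first assertion is to build the shallowest possible $(\cA,\generatorSet)$-tree. Set $\generatorSet = \emptyset$ (permissible since the spectral-radius condition on generators is then vacuously satisfied), let $\bt_0 = \{\Id\}$ be the root, and attach $J$ sibling leaves $\bt_j = \{A_j\}$, $j = 1,\ldots,J$. All structural requirements of Definition~\ref{def:tree} are trivially met, and the leafage obeys $\cL \subseteq \{A_1,\ldots,A_J\}$, with any $A_j = \Id$ automatically yielding a covered leaf. The hypothesis $\cA V \subseteq \co_s V$ then directly gives
\[
\cL V \subseteq \cA V \subseteq \co_s V,
\]
which is exactly $V$-closedness.

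For the second assertion, it suffices to exhibit one concrete witness. I would take the singleton $\cA = \{A\}$ with the Jordan block $A = \tbmatrix[r]{1 & 1 \\ 0 & 1}$, so that $\JSR(\cA) = \rho(A) = 1$. With $\generatorSet = \{[1]\}$ (admissible since $\rho(A_{[1]}) = 1$), I would form the three-node tree whose root is $\{\Id\}$, whose only child is the generator $G = \{A^n : n \in \NN_0\}$, and whose single sibling of $G$ (only $J = 1$ possibility) is $\{A \cdot A^n : n \in \NN_0\} = \{A^{n+1} : n \in \NN_0\}$. The last node is a proper subset of its parent $G$, hence a covered leaf, so the leafage is empty. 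The strong inclusion $\cL V \subseteq \gamma \co_s V$ then holds vacuously for every $\gamma \in (0,1)$ and every finite spanning $V$, so the tree is strongly \Vclosed. To see that invariant polytope algorithms fail here, I would observe that $A^n e_2 = (n, 1)^\top$ is unbounded, so no norm with $\norm{A} \leq 1$ can exist and therefore no bounded invariant polytope with non-empty interior can be produced.

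The main obstacle, as I see it, lies in the second assertion. The Jordan block construction is logically clean but somewhat degenerate: the leafage is empty and strong closedness holds vacuously. A more informative witness would feature an irreducible $\cA$ with several s.m.p.s\ (so that Theorem~\ref{thm:ipa} itself certifies non-termination of IPA) and a tree whose leafage is non-empty, forcing an actual verification of $\cL V \subsetneq \co_s V$. The delicate point in any such construction is to arrange that every leading eigenvector of an s.m.p.\ is absorbed into a generator node rather than appearing in a leaf, since boundary contacts at these eigenvectors would immediately spoil strong closedness.
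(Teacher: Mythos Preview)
Your argument for the first assertion is exactly the paper's: the depth-one tree with root $\{\Id\}$ and leaves $\{A_1\},\ldots,\{A_J\}$ is \Vclosed precisely when $\cA V \subseteq \co_s V$.

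For the second assertion your Jordan-block example is logically valid but genuinely different from the paper's route. The paper exhibits the irreducible pair
\[
A_1=\begin{bmatrix}1&0\\0&-1\end{bmatrix},\qquad
A_2=\begin{bmatrix}0&1\\1/2&0\end{bmatrix},
\]
whose s.m.p.~$A_1$ has the two leading eigenvalues $\pm 1$; non-termination of the invariant polytope algorithm then follows from Theorem~\ref{thm:ipa}. A tree with generator $\generatorSet=\{[1]\}$ placed at the root, siblings below it, and a \emph{non-empty} uncovered leafage $\cL=\{A_2A_1^n:n\in\NN_0\}$ is then verified to satisfy $\cL V\subseteq \tfrac34\co_s V$ for a concrete choice of $V$. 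Your construction instead exploits reducibility: the single Jordan block forces every full-dimensional orbit to be unbounded (so no invariant polytope with interior exists), while the covered leaf $\{A^{n+1}:n\in\NN_0\}\subset\{A^n:n\in\NN_0\}$ makes $\cL=\emptyset$, so strong \Vclosed{}ness holds vacuously for any spanning $V$.

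What each buys: your example is shorter and requires no computation, but---as you yourself note---it is degenerate in two respects: the set is reducible (so Theorem~\ref{thm:ipa} does not even apply, and you must argue IPA failure directly from unboundedness), and the strong \Vclosed{}ness is contentless. The paper's example is irreducible, has non-empty leafage, and actually demonstrates the mechanism the hybrid algorithm is meant to exploit: generators absorb the boundary-touching eigenvectors while genuine leaves land strictly inside $\co_s V$. For the bare existence claim of the theorem your witness suffices; as an illustration of the method, the paper's example carries more information.
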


\begin{proof}
The first part of the theorem follows easily by 
choosing the tree $\bT$ consisting of nothing but the root and the matrices
$A_1,\dots,A_J$ as its children. In this case, closedness of $V$ with respect 
to~$\cA$ is equivalent to $\bT$ being a \Vclosed tree.

The
second part of the
theorem is established by means of a simple example:
Consider the pair
\begin{equation*}
   A_1 = \begin{bmatrix}
   1 & 0 \\ 0 & -1    
   \end{bmatrix}
   ,\quad 
   A_2 = \begin{bmatrix}
   0 & 1 \\ 1/2 & 0    
   \end{bmatrix},
\end{equation*}
and the s.m.p.-candidate $\Pi = A_1$
corresponding to the index set $\generatorSet = \set{\tbmatrix{1}}$.
According to Theorem~\ref{thm:ipa},
the two leading eigenvalues $\pm 1$ of $\Pi$ rule out the
possibility that the invariant polytope algorithm terminates.
However, choosing the leading eigenvectors $v_1 = \tbmatrix{1& 0}^T$ and $v_2 = \frac{1}{2}\tbmatrix{0 & 1}^T$ of $\Pi$
to form the set $V = \set{v_1,v_2}$,
the $(\cA,\generatorSet)$-tree $\bT_1$ consisting of the root and the leafs $\set{A_1}$ and $\set{A_2}$
is \Vclosed since
\begin{equation*}
  A_1 v_1 = v_1
  ,\quad 
  A_1 v_2 = -v_2
  ,\quad 
  A_2 v_1 = v_2
  ,\quad 
  A_2 v_2 = v_1/2
\end{equation*}
all lie in $\co_s(V)$.
This confirms the claim $\JSR(\cA) = 1$.
However, $\bT_1$ is not a strongly \Vclosed tree because $A_1v_1$, $A_1 v_2$, and $A_2 v_1$ all lie on the boundary of $\co_s(V)$.
An alternative tree $\bT_\gamma$,
with the set 
$V=\set{v_1, v_2}$ consisting of the leading eigenvectors
$v_1 = \tbmatrix{1 & 0}^T$ and $v_2 = \frac{3}{4}\tbmatrix{0 & 1}^T$,
is shown in Figure~\ref{fig:2}.
It contains one uncovered leaf,
\begin{equation*}
  \cL = \{A_2 A_1^n : n \in \NN\}
  = \left\{ \begin{bmatrix}
  0 & -1^{n} \\ 1/2 & 0
  \end{bmatrix}\right\},
\end{equation*}
which has to be checked for closedness.
For $L \in \cL$, both
$L v_1 = \frac{1}{2}\tbmatrix{0&1}^T$
and
$L v_2 = \pm\frac{3}{4}\tbmatrix{1&0}^T$
lie in $\frac{3}{4}\co_S(V)$. Hence, $\bT_\gamma$ is a strongly \Vclosed tree.
\end{proof}
\begin{figure}
    \centering
    \raisebox{1.4cm}{%
    \includegraphics[height=1.6cm]{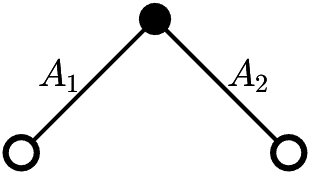}}
    \hspace{25mm}
    \includegraphics[height=3.0cm]{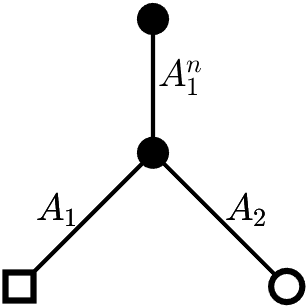}
    \caption{\Vclosed tree $\bT_1$ \emph{(left)} and
    strongly \Vclosed tree $\bT_\gamma$ \emph{(right)}
    for the example in the proof of Theorem~\ref{thm:terminate}.
    }
    \label{fig:2}
\end{figure}

Unlike in the most simple example presented in the proof,
single children and their descendants are typically countably infinite.
But even then, ranges for the products $Lv$ can be 
determined by numerical or analytical methods which are sufficiently tight to establish 
\Vclosed{}ness. We will discuss this issue in the following section.


\section{Algorithms}
We envision two ways how to combine the tree and the invariant polytope algorithms:

\subsection{Invariant polytope flavoured tree algorithm}
We start by running an invariant polytope algorithm for a few steps.
The so-far constructed norm is then already well adapted to the given 
task, meaning that the norms of the matrices $A_j$ are close to~$1$.
After that, we we feed this norm into the tree algorithm.


\subsection{Tree flavoured invariant polytope algorithm}
Run an invariant polytope algorithm.
Instead of always testing whether the $J$ children of a vertex 
are mapped into the interior, we try to construct $(\cA,\generatorSet)$-subtrees
and check whether their leafage is $1$-bounded.


We give a brief pseudo code implementation of this algorithm.
The only difference to the original invariant polytope algorithm (see e.g.~\cite[Section 4]{Mej2020})
lies in the lines marked with $(\ast)$.
The original invariant polytope algorithm chooses in those lines as $\boldsymbol{T}$ always the set $\{A_1,\ldots,A_J\}$.

\label{sec:implementation}  
\begin{algorithm_plain}[Tree-flavoured-invariant-polytope-algorithm]
\label{alg_feta_flavoured_ipa}
~\begin{flushleft}
{\bfseries Given: } s.m.p.-candidates $\Pi_1,\ldots,\Pi_M$ with $\rho(\Pi_m)=1$, $m=1\ldots,M$\\
{\bfseries Result upon Termination: } Invariant polytope for~$\cA$
\rule{0.7\textwidth}{0.8pt}\\
Select leading eigenvectors $V \coloneqq \set{v_1,\,\ldots,\,v_M}$\\
Set $V_{\text{new}} \coloneqq V$\\
{\bfseries while} $V_{\text{new}}\neq\emptyset$\\
$\qquad$ Set $V_{\text{rem}}\coloneqq V_{\text{new}}$\\
$\qquad$ Set $V_{\text{new}}\coloneqq \emptyset$\\
$\qquad$ {\bfseries for} $v\in{\cA}V_{\text{rem}}$ {\bfseries do}\\
$\qquad\qquad$ Construct some $\bT$ satisfying Definition~\ref{def:tree}\hfill $(\ast)$ \qquad ~\\
$\qquad\qquad$  {\bfseries if}  $L v \notin \co_s(V)$ for any 
$L\in\cL(\bT)$ {\bfseries then} \hfill $(\ast)$ \qquad ~\\
$\qquad\qquad\qquad$ Set $V\coloneqq V\cup v$ \\
$\qquad\qquad\qquad$ Set $V_{\text{new}} \coloneqq V_{new} \cup v$ \\
{\bfseries return} $\co_s V$
\end{flushleft}
\end{algorithm_plain}

\subsection{Implementation details}

Now, we are concerned with the task
to check whether all points of a finitely expressible tree are mapped into a polytope.
Instead of using matrix balls, as done in~\cite{MR2014},
we will make use of the pseudo spectral radius of matrices in the following.

Let $\co_s P\varsubsetneq\RR^s$ be a polytope,
$w\in\RR^s$ a vector, and
$X,\Pi \in\RR^{s\times s}$ some matrices.
Furthermore, we presume that $X\Pi^\infty v\in P$,
where $\Pi^\infty$ denotes the set of accumulation points of $\Pi^{\ast}=\set{\Pi^n:n\in\NN_0}$.
Our task is to check whether
\begin{equation}
\label{equ_problem1}
X\Pi^{\ast} v\in \co_s P.
\end{equation}

Without loss of generality we can assume that $\rho(\Pi)=1$
and that the algebraic and geometric multiplicities coincide 
for all of its leading eigenvalues.

In general the set $\Pi^{\ast}$ has infinitely many limit points.
Denote with $\Pi^\circ$ one of them and choose $L\in\NN$.
If $\Pi^\infty$ is finite, 
then we can choose $L$ equal to the number of accumulation points of $\Pi^\infty$.

We are going to estimate the difference 
$\norm{X(\Pi^n-\Pi^l\Pi^\circ) v}$, $l=0, \ldots, \allowbreak L-1$,
by splitting up $v$ into the part which is in the subspace 
spanned by the leading eigenvectors of $\Pi$ and the rest.

Denote with $K$ the number of leading eigenvalues of $\Pi$.
From the Kreiss Matrix Theorem~\cite[Satz 4.1]{Kreiss1962} and Theorem~\ref{thm:bounded}
it follows that 
there exists an invertible matrix $V$ and a matrix $\Delta$ which
is a diagonal matrix $\Lambda^K$ 
with all entries equal to one in modulus for the first $K$ entries,
and an upper triangular matrix $T$ for the remaining part,
i.e.~%
\begin{equation}
\label{equ_pi}
\Pi = V \Delta V^{-1} 
\text{ with }
\Delta = \begin{bmatrix}
\Lambda^K & 0\\0 & T
\end{bmatrix},
\Lambda^{K} = \begin{bmatrix}
    \lambda_1 &        & 0           \\
                & \ddots &             \\
    0           &        & \lambda_K
    \end{bmatrix}.
\end{equation}
Now, with $q^n:=X\Pi^nv$ and $q^{l\circ}:=X\Pi^l\Pi^\circ v$,
\begin{equation}
\label{equ_difference}
q^n-q^{l\circ} = 
XV\Delta^nV^{-1}v - XV\Delta^l\Delta^{\!\circ} V^{-1}v =
XV(\Delta^n-\Delta^l\Delta^{\!\circ})V^{-1} v.
\end{equation}
We set $w=V^{-1} v$ and split up $w = w^K + w^R$ 
where $w^K$ holds the first $K$ entries of $w$ and $w^R$ the rest.
The difference $\Delta^{n,l\circ}=\Delta^n-\Delta^l \Delta^{\!\circ}$ takes the form
\begin{align*}
\Delta^{n,l\circ} &= 
\begin{bmatrix}
\Lambda^n - \Lambda^l\Lambda^{\!\circ} & 0 \\
0 & 0
\end{bmatrix} 
+
\begin{bmatrix}
0 & 0 \\
0 & T^n
\end{bmatrix}
=
\Delta^{n,l\circ,K} + \Delta^{n,R},
\end{align*}
where $\Delta^{n,l\circ,K}$ contains the first $K$ entries of the diagonal of $\Delta^{n,l\circ}$ 
and $\Delta^{n,R}$ the rest,
and $T$ is the same matrix as in~\eqref{equ_pi}.
Combining everything we get
\begin{equation*}
\norm{q^n - q^{l\circ}} \leq
\norm{X V} 
\left(
\norm{\Delta^{n,l\circ,K}} \cdot \norm{w^K} + \norm{\Delta^{n,R}}  \cdot \norm{w^R}
\right).
\end{equation*}

It remains to estimate, for arbitrary $n\in\NN$,
$(a)$ the norms of powers of the triangular matrix $\Delta^{n,R}$
whose entries on the main diagonal are all strictly less than one in modulus, and
$(b)$ the norms of $\Delta^{n,l\circ,K}$.

We estimate $(a)$ first.
Let $\gamma<1$ be the largest eigenvalue of $T$ in modulus.
By the Kreiss Matrix theorem there exist $M\geq 1$ and $1>\gamma_M>\gamma$, such that
\begin{equation*}
\norm{\Delta^{n,R}}\leq M \gamma_M^{n}.
\end{equation*}
One way to compute $M$ and $\gamma_M$ is to use the pseudo spectral radius~$\prho$~\cite{psapsr,Meng2006}.
For a given $\varepsilon>0$ and matrix norm $\norm{\vardot}$
the pseudo spectral radius $\prho$ is defined as
\begin{equation*}
\prho(\Delta) =
\sup \set{
\abs{z}:
z\in\CC,\ 
\big|\big|
{(zI-\Delta)^{-1}}
\big|\big|
> \varepsilon^{-1}
}.
\end{equation*}
and it holds that
$M = \varepsilon^{-1} \cdot \prho(\Delta)$ and
$\gamma_M = \prho(\Delta)$.
Because $\gamma_M<1$ there exists $N_{\Delta^{n,R}}\in\NN$ such that
\begin{equation*}
\max_{n\in\NN} \norm{\Delta^{n,R}} = \max_{1\leq n\leq N_{\Delta^{n,R}}} \norm{\Delta^{n,R}}.
\end{equation*}

We proceed with estimating $(b)$.
The norm $\norm{w^K}$ typically is not small. 
But, we can make the 
norm $\norm{\Delta^{n,l\circ,K}}$ arbitrarily small by taking $L$ large enough
and finding the optimal index $l$.
Computing this norm with respect to the polytope $\co_s P$ 
may be prohibitively expensive, and thus we resort in the following
to the 2-norm for its estimation.
The polytope-norm can be bounded by the 2-norm up to a factor of $C_P$
(see Lemma~\ref{thm:bound2norm} for details).
In the 2-norm, for given $n\in\NN$, the above problem reduces to computing the minimum 
(and corresponding argument~$l$) of
\begin{align*}
\norm{\Delta^{n,l\circ,K}}_2 & = 
 \minp_{0\leq l < L}\ %
\maxp_{1\leq k \leq K}\ %
\abs{
\lambda^l_k \lambda^{\!\circ} - \lambda_k^n
} \\
& \leq \sup_{\substack{x\in\CC^K\\\norm{x}_2=1}}\ %
 \minp_{0\leq l < L}\ %
 \maxp_{1\leq k \leq K}\ %
 \abs{
 \lambda^l_k - x
 } =: e_\Delta.
\end{align*}
%
Clearly, for any given $\varepsilon>0$ there exists $L_\varepsilon$ such that $e_\Delta<\varepsilon$.
Putting everything together, for arbitrary $\varepsilon>0$ and any $n,l\in\NN_0$,
\begin{equation}
\label{equ_estimate}
\begin{aligned}
\norm{q^n} &\leq 
\norm{q^n-q^{l\circ}}+\norm{q^{l\circ}} \\
& \leq
\norm{X V}
\left(
    C_P
    \varepsilon    
     \norm{w^K}  
    + 
    \max_{1\leq n\leq N_{\Delta^{n,R}}} \norm{\Delta^{n,R}}  \cdot \norm{w^R}
\right)
+
\max_{0\leq l < L_\varepsilon} \norm{q^{l\circ}},
\end{aligned}
\end{equation}
where the right hand side does not depend on $n$ and $l$ any more.

\begin{remark}
\label{rem_easycase}
If $\Pi^\infty$ is finite, i.e.\ all leading eigenvalues of $\Pi$ are roots of unity,
we can choose $\varepsilon=0$, $L_\varepsilon=\abs{\Pi^\infty}$,
and
Equation~\eqref{equ_estimate} simplifies to
\begin{equation}
\label{equ_estimate_simple}
\norm{q^n} \leq 
\norm{X V} \cdot
    \max_{1\leq n\leq N_{\Delta^{n,R}}} \norm{\Delta^{n,R}}  \cdot \norm{w^R}
+
\max_{0\leq l < \abs{\Pi^\infty}} \norm{q^{l\circ}},
\end{equation}
\end{remark}


We show the applicability of the described method on an example.
\begin{example}
\label{ex:infinitypath}
Let $\cA=\set{A_1,\,A_2}$, with
\begin{equation*}
A_1 = \frac{1}{\sqrt{13}}\begin{bmatrix}0&-1&\phantom{-}2\\1&\phantom{-}0&-1\\2&-2&\phantom{-}1\end{bmatrix},\quad
A_2 = \frac{1}{\sqrt{13}}
\begin{bmatrix}\phantom{-}1&-2&\phantom{-}2\\-2&\phantom{-}2&\phantom{-}1\\\phantom{-}2&\phantom{-}1&-2\end{bmatrix}.
\end{equation*}
The set~$\cA$ has joint spectral radius $\JSR(\cA)=\rho(A_2)=1$.
The s.m.p.~$\Pi=A_2$ has two leading eigenvalues $\pm 1$,
and thus, the invariant polytope algorithm cannot terminate~\cite[Theorem 4]{GP2013}. 
Let 
$v_0 = \begin{bmatrix}
3+\sqrt{13}&-4-\sqrt{13}&1
\end{bmatrix}^T$ be the 
leading eigenvector corresponding to $+1$.
We show that the tree-flavoured-invariant-polytope-algorithm terminates with the polytope 
$P = \co_s \{%
v_0,\,\allowbreak
A_1 v_0,\,\allowbreak
A_1 A_1 v_0,\,\allowbreak
A_2 A_1 v0,\,\allowbreak
A_1 A_2 A_1 v_0,\,\allowbreak
A_2 A_2 A_1 v0
\}$, 
and whose corresponding finitely expressible tree can be seen in Figure~\ref{fig:infinitypath}.

\begin{figure}
\centering
\includegraphics[height=6cm]{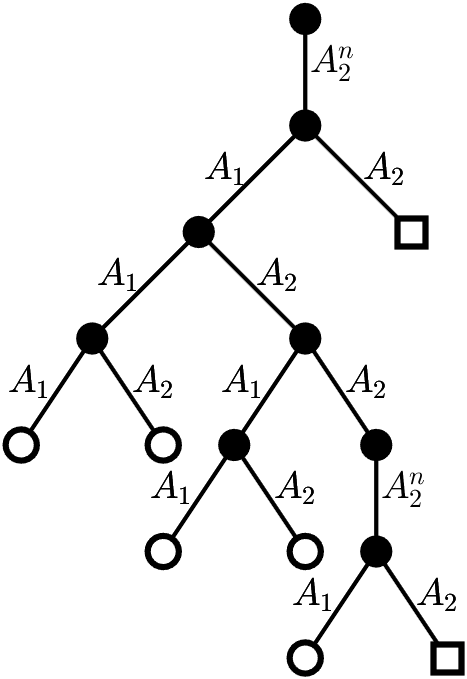}

\caption{%
Tree constructed by the tree-flavoured-invariant-polytope-algorithm
for the problem considered in Example~\ref{ex:infinitypath}.
Dots $(\bullet)$ mark the vertices of the polytope $P$;
Empty bullets $(\circ)$ mark vertices which are mapped into $\co_s P$;
Squares $(\square)$ mark covered nodes.
Note that the starting vector $v_0$ use to construct the polytope $P$
is an eigenvector of $A_2$,
and thus the node $\set{A_2^n v_0}$ only consists of one vector;
}
\Description{A finitely expressible tree for Example~\ref{ex:infinitypath}}
\label{fig:infinitypath}  
\end{figure}

With the notation from Remark~\ref{rem_easycase}, for 
$M_0=2$ we have that $\Pi^{M_0} = A_2^2$ has all eigenvalues equal to one.
We choose
$\Pi^\circ = \lim_{n\rightarrow\infty} (A^2)^n$.

After constructing the polytope $P$ consisting of the vertices 
in the dashed box 
{\small(%
$P = \co_s \{%
v_0,\,\allowbreak
A_1 v_0,\,\allowbreak
A_1 A_1 v_0,\,\allowbreak
A_2 A_1 v0,\,\allowbreak
A_1 A_2 A_1 v_0,\,\allowbreak
A_2 A_2 A_1 v0
\}$)},
the vertices denoted with $\star$
{\small(%
$\{%
A_1 A_1^2 v_0,\,\allowbreak
A_2 A_1^2 v_0,\,\allowbreak
A_1 A_1 A_2 A_1 v_0,\,\allowbreak
A_2 A_1 A_2 A_1 v_0
\}$)}
are all contained inside of $P$, as can be easily checked by hand.

It remains to show that the vertices denoted with $\bullet$
{(\small%
$q_0^n = \allowbreak A^1 A_2^{2n} A_2^2 A_1 v_0$, 
$q_1^n = \allowbreak A^1 A^2 A_2^{2n} A_2^2 A_1 v_0$)}
are contained in $P$ for all $n\in\NN_0$.
We do not need to check the vertex denoted with $\circ$
{\small(%
$A_2 A_2^{2n} A_2^2 A_1 v_0$)}
since this is a covered vertex.
Denote for $m\in\{0,1\}$ the limit points of $q_0^n$ and $q_1^n$ by
$p_m = A^1 \Pi^m \Pi^{\circ2} A_2^2 A_1 v_0$.
It is easy to check that $p_0$ and $p_1$ are contained in $\co_s P$,
in particular
\begin{align*}
\max_{m\in\{0,1\}} \norm{p_m}_{\co_s P} &= 
\max \set{
\norm{p_0}_{\co_s P},\, 
\norm{p_1}_{\co_s P}}
\simeq 
\max \set{ 0.9714,\, 0.9819}
\simeq
0.9819.
\end{align*}
The matrix $A_2^2$ has the Schur form 
\begin{equation*}
A_2^2 = V D^2 V^{-1}, \quad
D^2 = \begin{bmatrix}
    1 &   &              \\
      & 1 &              \\
      &   & \frac{1}{13}
\end{bmatrix}, \quad
V = \begin{bmatrix}
    1 & 3 + \sqrt{13}  & 3 - \sqrt{13} \\
    1 & -4 - \sqrt{13} & -4 +\sqrt{13} \\
    1 & 1              & 1
\end{bmatrix}
\end{equation*}
and we set
$
\Delta^n = D^{2n}-D^{\circ 2} =
\diag{\begin{bmatrix}
    0 & 0 & \frac{1}{13^n}
    \end{bmatrix}
}
$.
Due to simple form of $\Delta$ we immediately 
obtain
$\max_{n\in\NN_0} \norm{\Delta^n}_{\co_s P} \leq 
\allowbreak
\norm{\Delta}_{\co_s P} \simeq 0.0006085$,
in the general case we need to resort to the computation of the 
pseudo spectral radius of~$\Delta$.
It remains to compute~$w^R$,
\begin{equation*}
V^{-1} A_2 A_1 v_0 = w^1 + w^R
\implies 
w^R = \frac{1}{507}\begin{bmatrix}
0&0&-78 + 23 \cdot \sqrt{13}
\end{bmatrix}^T.
\end{equation*}
Eventually, we estimate
\begin{align*}
\norm{q^n} &\leq 
\norm{A V} \cdot
    \max_{n} \norm{\Delta^{n}}  \cdot \norm{w^R} +
\max_{m \in\{0,\,1\}} \norm{p^m}
\\&
\leq
7.300 \cdot  0.0006085 \cdot 0.01299 
+
0.9819
\leq 
0.9820
\end{align*}
which proves the claim.
\end{example}

\begin{remark}\label{rem_extrapath}
The function \texttt{ipa} contained in~\cite{Mej2020} does not implement yet the ``tree flavoured``-modification,
but a simpler version of it is implemented; It tries finite subtrees in the lines marked with $(\ast)$.
This already decreases the computational time for some matrices of high dimensions significantly,
see Figure~\ref{fig:extrapath}.

\end{remark}

\begin{figure}
\centering
\begin{tabular}{r|cccccc}
    \emph{dimension} &  4  &  6   &  8  & 10  & 12  & 14  \\ 
    \emph{ratio of vertices} & 0.6 & 0.30 & 0.3 & 0.2 & 0.1 & 0.1 \\
    \emph{ratio of computation time} &  1  & 3.6  & 2.5 & 3.0 & 0.9 & 0.4
\end{tabular}
\caption{%
Comparison between the original invariant polytope algorithm,
and the algorithm described in Remark~\ref{rem_extrapath}.
For each dimension 20 tests were done.
First row: Ratio of number of vertices of computed polytope (values smaller than one mean, the new algorithm produces polytopes with less vertices).
Second row: Ratio between the computation times (values smaller than one mean, the new algorithm is faster).
One can see, that the new algorithm produces consistently smaller polytopes, but the introduced overhead does not pay of for small matrices.
}
\label{fig:extrapath}
\Description{}
\end{figure}

\section{Conclusion}
We designed two similar new algorithms for computing the $\JSR$ of a finite set of matrices,
both being a combination of the tree algorithm and the invariant polytope algorithms.
Which of the two has better performance,
and which of the two needs weaker conditions for termination
is to be investigated.

\section*{Acknowledgments}
The first author is sponsored by the Austrian Science Fund FWF, 
grant number~P3335221.

\appendix


\section{Upper and lower bounds of the polytope norm}
Unfortunately and as already noted,
the estimate of $\Delta^{n,l\circ,K}$,
as well as the computation of the pseudo spectral radius in the polytope norm,
is prohibitively expensive. Thus we have to resort to other norms in some cases.

\begin{lemma}
\label{thm:boundptnorm}
Let $V,W\varsubsetneq\RR^s$ be finite sets of vertices, 
and $\norm{\vardot}_V$, $\norm{\vardot}_W$ the corresponding polytope norms
with unit balls $\co_s V, \co_s W\varsubsetneq\RR^s$.
It holds that
\begin{equation*}
\norm{x}_W 
\leq 
\max_{v\in V} \norm{v}_W \cdot
\norm{x}_V.
\end{equation*}
In particular for the 1-norm,
\begin{equation*}
\min_{v\in V} \norm{v}_1^{-1} \cdot \norm{x}_1
\leq
\norm{x}_V
\leq
\max_{i=1,\ldots,s} \norm{e_i}_V \cdot \norm{x}_1.
\end{equation*}
\end{lemma}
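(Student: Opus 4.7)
The plan is to reduce everything to the first inequality, which in turn follows directly from the representation of vectors as signed convex combinations of vertices via the Minkowski norm.

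For the first inequality, I would start by observing that since $\co_s V$ is a compact symmetric convex set (being the symmetric convex hull of a finite vertex set), the infimum in Definition~\ref{def:minknorm} is attained, and $x/\norm{x}_V \in \co_s V$ for every $x \neq 0$. By Carathéodory's theorem (or simply by definition of $\co_s V = \co(V \cup -V)$), there exist vertices $v_1, \ldots, v_k \in V$ and coefficients $\lambda_1, \ldots, \lambda_k \in \RR$ with $\sum_{i=1}^k |\lambda_i| \leq 1$ such that
\begin{equation*}
  x = \norm{x}_V \sum_{i=1}^k \lambda_i v_i.
\end{equation*}
Applying the triangle inequality and absolute homogeneity of $\norm{\vardot}_W$ then gives
\begin{equation*}
  \norm{x}_W \leq \norm{x}_V \sum_{i=1}^k |\lambda_i| \norm{v_i}_W \leq \norm{x}_V \cdot \max_{v \in V} \norm{v}_W,
\end{equation*}
which is the claimed bound.

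For the specialization to the $1$-norm, I would use the fact that the unit ball of $\norm{\vardot}_1$ coincides with $\co_s \{e_1, \ldots, e_s\}$, where $e_1, \ldots, e_s$ is the standard basis. Hence $\norm{\vardot}_1$ is itself a polytope norm with vertex set $E := \{e_1, \ldots, e_s\}$. For the upper bound, apply the first inequality with $V$ replaced by $E$ and $W$ replaced by $V$, giving $\norm{x}_V \leq \max_i \norm{e_i}_V \cdot \norm{x}_1$. For the lower bound, apply the first inequality with $W$ replaced by $E$ (keeping $V$), yielding $\norm{x}_1 \leq \max_{v \in V} \norm{v}_1 \cdot \norm{x}_V$; rearranging and using $\min_{v \in V} \norm{v}_1^{-1} = (\max_{v \in V} \norm{v}_1)^{-1}$ finishes the claim.

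There is essentially no obstacle here; the only mild technicality is justifying the signed convex combination representation with $\sum |\lambda_i| \leq 1$, which is immediate from the definition of the symmetric convex hull and the fact that the Minkowski functional is attained on the boundary of the compact set $\co_s V$. The proof is a short and direct consequence of the definitions.
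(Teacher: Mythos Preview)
Your proof is correct and is essentially a fleshed-out version of the paper's one-line argument: the paper simply says that since both unit balls are polytopes, one only needs to compare the norms of one polytope's vertices in the other norm, which is exactly what your signed-convex-combination computation makes explicit. The specialization to the $1$-norm via $\co_s\{e_1,\dots,e_s\}$ and two applications of the first inequality is the intended route as well.
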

\begin{proof}
This follows from the fact, that both norm's unit balls are polytopes,
and thus, we only need to compute the norms of one polytope`s vertices
in the other polytope's norm.
\end{proof}

\begin{lemma}
\label{thm:bound2norm}
Let $V=\{v_n\in\RR^s:n=1,\ldots,N\}$ be a set of vertices
and $x\in\RR^s$.
It holds that
\begin{equation}
\label{equ:bound2norm_1}
r_s \norm{x}_2 \leq
\norm{x}_{\co_s V} \leq
R_s \norm{x}_2,
\end{equation}
where $r$ and $R$ are defined as
\begin{align*}
R_s^{-1} &= \max_{v\in V} \norm{v}_2, \\
r_s^{-2}  & = \left\{ \begin{aligned}
& \minimize \norm{p}_2^2,\ p\in\RR^s,  \text{ subject to } \\
& p = \sum_{v\in V} a_v v,\\
&\sum_{v\in V} \abs{a_v} \leq 1
\end{aligned}
\right.
.
\end{align*}
In particular, for $A\in\RR^{s\times s}$,
\begin{equation}
\label{equ_bound2norm_3}
\frac{r_s}{R_s} \norm{A}_2 \leq
\norm{A}_{\co_s P} \leq 
\frac{R_s}{r_s} \norm{A}_2
\end{equation}
\end{lemma}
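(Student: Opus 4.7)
My plan is to establish the vector-norm sandwich \eqref{equ:bound2norm_1} first, and then to derive the matrix-norm sandwich \eqref{equ_bound2norm_3} as a direct consequence by a standard operator-norm chain.

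For the vector bound, I would interpret each of the two inequalities as a containment between the unit ball $\co_s V$ of the polytope norm and a scaled Euclidean ball $B_2$. Recall that for symmetric convex bodies $K_1, K_2$ with zero in the interior, $K_1 \subseteq K_2$ is equivalent to $\norm{\vardot}_{K_2} \leq \norm{\vardot}_{K_1}$. Consequently, the upper bound $\norm{x}_{\co_s V} \leq R_s \norm{x}_2$ is equivalent to the inscribed-ball containment $R_s^{-1} B_2 \subseteq \co_s V$, and the lower bound $r_s \norm{x}_2 \leq \norm{x}_{\co_s V}$ is equivalent to the circumscribed-ball containment $\co_s V \subseteq r_s^{-1} B_2$. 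The circumscribed-ball direction is the more immediate one: every $p \in \co_s V$ admits a representation $p = \sum_{v\in V} a_v v$ with $\sum_v \abs{a_v} \leq 1$, so the defining extremum for $r_s^{-2}$ applied to such representations bounds $\norm{p}_2 \leq r_s^{-1}$, yielding the containment. For the inscribed-ball direction I would combine the equality $R_s^{-1} = \max_v \norm{v}_2$ with the symmetry and spanning property of $V$ to exhibit a Euclidean ball of radius $R_s^{-1}$ inside $\co_s V$; a natural route is via polar duality, expressing the inradius of $\co_s V$ through the vertex data of its polar polytope.

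Once the vector bound is in hand, the matrix inequality \eqref{equ_bound2norm_3} follows by chaining. For any $x \neq 0$,
\begin{equation*}
\norm{Ax}_{\co_s V} \leq R_s \norm{Ax}_2 \leq R_s \norm{A}_2 \norm{x}_2 \leq \frac{R_s}{r_s} \norm{A}_2 \norm{x}_{\co_s V},
\end{equation*}
so taking the supremum over $x$ with $\norm{x}_{\co_s V} = 1$ yields $\norm{A}_{\co_s V} \leq (R_s/r_s) \norm{A}_2$. Interchanging the roles of the two norms in the same chain gives $\norm{Ax}_2 \leq r_s^{-1} \norm{Ax}_{\co_s V} \leq r_s^{-1} \norm{A}_{\co_s V} \norm{x}_{\co_s V} \leq (R_s/r_s) \norm{A}_{\co_s V} \norm{x}_2$, equivalently $(r_s/R_s) \norm{A}_2 \leq \norm{A}_{\co_s V}$, which is the matching lower matrix bound.

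I expect the main technical obstacle to be the inscribed-ball containment $R_s^{-1} B_2 \subseteq \co_s V$: the equality $R_s^{-1} = \max_v \norm{v}_2$ by itself controls the extent of the vertices in Euclidean norm rather than the distance from the origin to the boundary of $\co_s V$, so extracting an inscribed ball of the required radius will require an additional duality or geometric argument — passing to the polar polytope $(\co_s V)^\circ$ and relating its circumradius to the vertex data of $V$ seems the most promising route.
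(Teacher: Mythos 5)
The paper states Lemma~\ref{thm:bound2norm} without proof (it is used only to supply a two-sided bound in Section~\ref{sec:implementation} and in Lemma~\ref{thm:bound2norm_casec}), so there is no argument in the paper to compare yours against. More importantly, there are substantive problems both with the lemma as printed and with the steps you outline, and the proposal cannot be completed as written.

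Consider first $r_s$. The optimization ``$\minimize\;\norm{p}_2^2$ subject to $p=\sum_{v}a_vv$, $\sum_v|a_v|\le 1$'' ranges exactly over $p\in\co_s V$, and $p=0$ is feasible (all $a_v=0$), so the stated value is $r_s^{-2}=0$. Consequently your circumscribed-ball step is invalid: because the defining extremum is a \emph{minimum}, applying it to a feasible $p$ gives $\norm{p}_2\ge r_s^{-1}$, not $\norm{p}_2\le r_s^{-1}$, and hence cannot deliver the containment $\co_s V\subseteq r_s^{-1}B_2$ that you need for the lower inequality.

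On the other side, the claimed upper bound $\norm{x}_{\co_s V}\le R_s\norm{x}_2$ with $R_s^{-1}=\max_{v\in V}\norm{v}_2$ is false in general: take $V=\{e_1,e_2\}\subset\RR^2$, so that $\co_s V$ is the $\ell^1$ unit ball and $R_s=1$; for $x=(1,1)^T$ one has $\norm{x}_{\co_s V}=2>\sqrt2=R_s\norm{x}_2$. Your closing observation that $\max_v\norm{v}_2$ controls the circumradius of $\co_s V$ rather than its inradius is exactly the point, but the obstacle is not merely technical and no polar-duality detour rescues the inequality as printed. What is evidently intended is the sandwich with the roles of $r_s$ and $R_s$ interchanged: $\max_v\norm{v}_2$, being the circumradius, gives $\co_s V\subseteq R_s^{-1}B_2$ and thus the \emph{lower} bound $R_s\norm{x}_2\le\norm{x}_{\co_s V}$, while the \emph{upper} bound requires the reciprocal of the inradius of $\co_s V$, i.e.\ a minimization of $\norm{p}_2$ over the \emph{boundary} of $\co_s V$ (for instance over representations with $\sum_v|a_v|=1$ when $V$ is a basis), not over all of $\co_s V$. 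Your chaining argument deducing~\eqref{equ_bound2norm_3} from~\eqref{equ:bound2norm_1} is standard and would be fine once a correct vector-norm sandwich is in hand.
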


\section{Generalization to the complex case}
\label{sec:complex}
In general, the leading eigenvalues may be complex. 
In this case, one has to replace the symmetric convex hull $\co_s$ with the
elliptic convex hull, see Definition~\ref{def:hull} and Figure~\ref{fig:hull}.
Also the estimates about the pseudo spectral radius in Section~\ref{sec:implementation} change.
For the remaining details see~\cite{MP2022}.
\begin{definition}\label{def:hull}
    For $v=a+ib\in\CC^s$ we define its corresponding ellipse $E(v)=E(a,b)$
    as the set of points $\set{a \cos t + b \sin t:t\in\RR}$.
    For finite $V\varsubsetneq\CC^s$, we define the \emph{elliptic convex hull}
    of $V$ by
    \begin{equation}\label{equ_cosym_absco}
    \coell V = \co \set{ E(v) : v\in V}.
    \end{equation}
\end{definition}

\begin{figure}[t]
\centering
{\small

\begin{tikzpicture}[scale=0.70]
    \draw [-stealth](-3,0) -- (3,0);
    \draw [-stealth](0,-2.5) -- (0,2);
    
    \node at (1,1.5) {$v_1$};
    \node at (1,2) {\textbullet};
    \node at (2.4,1) {$v_2$};
    \node at (3,1) {\textbullet};
    \node at (3.5,1.7) {$\co_s(\{v_1,v_2\})$};
    \draw [thick] (1,2) -- (3,1) -- (-1,-2) -- (-3,-1) -- (1,2);
\end{tikzpicture}
\begin{tikzpicture}[scale=0.70]
    \draw [-stealth](-2.5,0) -- (2.5,0);
    \draw [-stealth](0,-2.5) -- (0,2.5);

    \draw    (0,0) ellipse (2 and 1);
    \node at (0.8,2.2) {$E(v_3)$};
    \draw    (0,0) ellipse (1 and 2);
    \node at (2.7,0.3) {$E(v_4)$};
    \node at (3,1.3) {$\operatorname{absco}(\{v_3,v_4\})$};
    
    \draw  [thick] ( 1.788854381999832, 0.447213595499958) -- ( 0.447213595499958, 1.788854381999832);
    \draw [thick] (-1.788854381999832, 0.447213595499958) -- (-0.447213595499958, 1.788854381999832);
    \draw [thick] ( 1.788854381999832,-0.447213595499958) -- ( 0.447213595499958,-1.788854381999832);
    \draw [thick] (-1.788854381999832,-0.447213595499958) -- (-0.447213595499958,-1.788854381999832);
\end{tikzpicture}
}
\caption[Various convex hulls]%
{
Symmetric and elliptic convex hull. 
$v_1=\begin{bsmallmatrix}1\\2\end{bsmallmatrix}$,
$v_2=\begin{bsmallmatrix}2\\1\end{bsmallmatrix}$,
$v_3=\begin{bsmallmatrix}2\\i\end{bsmallmatrix}$,
$v_4=\begin{bsmallmatrix}i\\2\end{bsmallmatrix}$,
}
\label{fig:hull}
\Description{Various convex hulls}
\end{figure}

\begin{lemma}
\label{thm:bound2norm_casec}
Let $V=\{v_n\in\CC^s:n=1,\ldots,N\}$ be a set of vertices and $x\in\CC^s$.
It holds that
\begin{equation*}
r_e \norm{x}_2 \leq
\norm{x}_{\coell P} \leq
R_e \norm{x}_2,
\end{equation*}
where $r_e$ and $R_e$ are defined as
\begin{align*}
R_e^{-1} &= \max_{v\in \Re V} \norm{v}_2 + \max_{v\in \Im V} \norm{v}_2\\
r_e^{-2}  & = \left\{ \begin{aligned}
& \minimize 2\norm{p}_2^2,\ p\in\RR^s,  \text{ subject to } \\
& \sum_{v\in \Re V \cup \Im V} a_v v = p,\\
&\sum_{v\in \Re V \cup \Im V} \abs{a_v} \leq 1
\end{aligned}
\right.
.
\end{align*}
In particular, for $A\in\RR^{s\times s}$,
\begin{equation*}
\frac{r_e}{R_e} \norm{A}_2 \leq
\norm{A}_{\coell P} \leq 
\frac{R_e}{r_e} \norm{A}_2
\end{equation*}
\end{lemma}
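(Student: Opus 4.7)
The plan is to mirror the proof of Lemma~\ref{thm:bound2norm} with the symmetric convex hull replaced by the elliptic convex hull $\coell V$. I would proceed in three steps: an outer bound of the form $\coell V \subseteq B_2(R_e^{-1})$, an inner bound of the form $B_2(r_e^{-1}) \subseteq \coell V$, and the induced matrix-norm bound \eqref{equ_bound2norm_3} as a direct consequence of the two vector-norm inequalities.

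For the outer bound I would estimate directly. Any $p \in \coell V$ can be written as a convex combination $p = \sum_i \lambda_i (\Re v_i \cos t_i + \Im v_i \sin t_i)$ with $\lambda_i \geq 0$, $\sum_i \lambda_i = 1$, and $t_i \in \RR$. The triangle inequality together with $\abs{\cos t_i}, \abs{\sin t_i} \leq 1$ yields
\[
\norm{p}_2 \leq \sum_i \lambda_i \bigl(\norm{\Re v_i}_2 + \norm{\Im v_i}_2\bigr) \leq \max_{v \in \Re V} \norm{v}_2 + \max_{v \in \Im V} \norm{v}_2 = R_e^{-1}.
\]
Hence $\coell V$ is contained in the Euclidean ball of radius $R_e^{-1}$, and the corresponding inequality between the two norms follows immediately from Definition~\ref{def:minknorm}.

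For the inner bound I would show that the largest Euclidean ball contained in $\coell V$ has radius determined by the minimization defining $r_e^{-2}$. Any boundary point of $\coell V$ lies on a face whose $2$-norm is controlled from below by convex combinations of the real generators $\Re v$ and $\Im v$. The factor $2$ appearing in the objective $2\norm{p}_2^2$ accounts for the fact that each complex vertex $v$ contributes \emph{two} real generators, so the total $\ell^1$-coefficient mass representing a point of $\coell$-norm one is at most $2$ rather than $1$; dividing by $\sqrt{2}$ after the minimization recovers the stated inradius. This yields the complementary vector inequality.

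Combining the two vector inequalities, the induced matrix-norm bound follows by chaining exactly as in the real case: for $x$ with $\norm{x}_2 = 1$ one converts to the $\coell V$-norm via the inner bound, applies the induced $\coell V$-norm of $A$, and converts back using the outer bound, and a symmetric argument produces the reverse inequality. The main obstacle will be the inner bound and in particular a clean justification of the factor~$2$: unlike the symmetric case, the boundary of $\coell V$ is not purely polyhedral but contains curved arcs from the generating ellipses, so one must argue carefully that the minimum $2$-norm over $\partial \coell V$ is actually attained at a point representable in the form demanded by the optimization over $\co_s(\Re V \cup \Im V)$.
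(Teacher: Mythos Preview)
Your outer bound is fine and in fact slightly sharper than what the paper needs, but the route you take for the inner bound is not the paper's, and the gap you yourself flag is real. The paper does \emph{not} analyse the curved boundary of $\coell V$ at all. Instead it proves the two-sided comparison
\[
\tfrac{1}{2}\,\norm{x}_{\co_s(\Re V\cup \Im V)}\ \le\ \norm{x}_{\coell V}\ \le\ \norm{x}_{\co_s(\Re V\cup \Im V)}
\]
and then simply invokes Lemma~\ref{thm:bound2norm} for the real polytope $\co_s(\Re V\cup\Im V)$. Both inclusions behind this sandwich are elementary: each ellipse $E(v)=\{\Re v\cos t+\Im v\sin t\}$ contains $\pm\Re v$ and $\pm\Im v$, giving $\co_s(\Re V\cup\Im V)\subseteq\coell V$; conversely $|\cos t|+|\sin t|\le 2$ shows $E(v)\subseteq 2\co_s\{\Re v,\Im v\}$, whence $\coell V\subseteq 2\co_s(\Re V\cup\Im V)$. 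The factor $2$ in the definition of $r_e^{-2}$ then drops out automatically from this halving, with no need to reason about where on $\partial\coell V$ the minimum $2$-norm is attained.

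So the difference is structural: you attempt a direct inradius computation for a body with mixed flat/curved boundary, which would require an extra argument you do not supply; the paper replaces $\coell V$ by nested symmetric polytopes and recycles the real lemma verbatim. Your outer-bound step and the matrix-norm deduction are both correct and essentially the same as in the real case; only the inner bound needs to be rerouted through the sandwich inequality above.
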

\begin{proof}
This follows from Lemma~\ref{thm:bound2norm} and the fact that
\begin{equation*}
\frac{1}{2} \norm{x}_{\co_s\Re V \cup \Im V}\leq 
\norm{x}_{\coell V} \leq 
\norm{x}_{\co_s \Re V \cup \Im V}
\end{equation*}
\end{proof}

{\small

\bibliographystyle{alpha}  

\newcommand{\doi}[1]{\href{https://doi.org/#1}{doi:~#1}}
\newcommand{\arxiv}[1]{\href{https://arxiv.org/abs/#1}{arXiv:~#1}}
\newcommand{\ttilde}{{\raise-1ex\hbox{\textasciitilde}}}

}

\end{document}